\newtheorem{theorem}{Theorem}
\newtheorem{proposition}[theorem]{Proposition}
\newtheorem{example}[theorem]{Example}
\theoremstyle{definition}
\newtheorem*{remark}{Remark}
\newtheoremstyle{red}{}{}{\normalfont}{}{\color{red!80!black}\bfseries}{.}{ }{}
\theoremstyle{red}
\let\nc\newcommand
\nc{\NegW}{W_\tau}
\nc{\RW}{\Omega_\FF}
\nc{\id}{\mathbbm{1}}
\renewcommand{\O}{\mathcal{O}}
\newcommand{\DD}{\mathbb{D}}
\newcommand{\FF}{\mathcal{F}}
\nc{\Dmax}{D_{\max}}
\let\O\OO
\nc{\SEP}{\mathrm{SEP}}
\nc{\STAB}{\mathrm{STAB}}
\nc{\PPT}{\mathrm{PPT}}
\nc{\PPTP}{\mathrm{PPTP}}
\nc{\SEPP}{\mathrm{SEPP}}
\nc{\SP}{\mathrm{KP}}
\nc{\FP}{\mathrm{FP}}
\nc{\CPTP}{{\mathrm{CPTP}}}
\nc{\Op}{\mathcal{O}}
\nc{\idc}{\mathrm{id}}
\nc{\ve}{\varepsilon}
\nc{\Omax}{\O_{\mathrm{max}}}
\nc{\sminfty}{{\infty,\bullet}}
\ifstrequal\expandafter{#1}{orange}{\begin{tcolorbox}[colback=orange!5,colframe=orange!15,breakable,enhanced]}{\begin{tcolorbox}[colback=white,colframe=gray!10,breakable,enhanced]}}%
\nc{\regrob}{\DD_{s,\FF}^{\infty}}
\nc{\regrobs}{\DD_{s,\FF}^{\sminfty}}
\renewcommand\onecolumngrid{%
\do@columngrid{one}{\@ne}%
\def\set@footnotewidth{\onecolumngrid}
\def\footnoterule{\kern-6pt\hrule width 1.5in\kern6pt}%
}
\let\oldproofname\proofname
\renewcommand{\proofname}{\rm\bf{\oldproofname}}
\def\id{\operatorname{id}}
\def\1{\openone}
\def\keywords{\xdef\@thefnmark{}\@footnotetext}
\def\DD{D}
\begin{document}

\title{Block perturbation of symplectic matrices in Williamson's theorem}

    \author{Gajendra Babu}
    \email{gajendra0777@gmail.com}
     \affiliation{Department of Mathematics, GLA University, Mathura 281406, India}

    \author{Hemant K. Mishra}
    \email{hemant.mishra@cornell.edu}
    \affiliation{Theoretical Statistics and Mathematics Unit, Indian Statistical Institute, Delhi 110016, India}
    \affiliation{School of Electrical and Computer Engineering, Cornell University, Ithaca, New York 14850, USA}

\begin{abstract}
           Williamson's theorem states that for any $2n \times 2n$ real positive definite matrix $A$, there exists a $2n \times 2n$ real symplectic matrix $S$ such that $S^TAS=D \oplus D$,  where $D$ is an $n\times n$ diagonal matrix with positive diagonal entries
            which are known as the symplectic eigenvalues of $A$. Let $H$ be any $2n \times 2n$ real symmetric matrix such that the perturbed matrix $A+H$ is also positive definite.  In this paper, we show that any symplectic matrix $\tilde{S}$ diagonalizing $A+H$ in Williamson's theorem is of the form $\tilde{S}=S Q+\mathcal{O}(\|H\|)$, where $Q$ is a $2n \times 2n$ real symplectic as well as orthogonal matrix. Moreover, $Q$ is in \emph{symplectic block diagonal} form with the block sizes given by twice the multiplicities of the symplectic eigenvalues of $A$. Consequently, we show that $\tilde{S}$ and $S$ can be chosen so that $\|\tilde{S}-S\|=\mathcal{O}(\|H\|)$. Our results hold even if $A$ has repeated symplectic eigenvalues. This generalizes the stability result of symplectic matrices for non-repeated symplectic eigenvalues given by Idel, Gaona, and Wolf [\emph{Linear Algebra Appl., 525:45-58, 2017}].
\end{abstract}

\maketitle


\vskip0.3in
\footnotetext{\noindent {\bf AMS Subject Classifications:} 15B48, 15A18.}

 \footnotetext{\noindent {\bf Keywords : } Positive definite matrix, symplectic matrix, symplectic eigenvalue, Williamson's theorem, perturbation.}
 
\section{Introduction}

Analogous to the spectral theorem in linear algebra is Williamson's theorem \cite{williamson1936algebraic} in symplectic linear algebra. It states that for any $2n \times 2n$ real positive definite matrix $A$, there exists a $2n \times 2n$ real {\it symplectic matrix} $S$ such that $S^TAS=D \oplus D$, where $D$ is an $n\times n$ diagonal matrix with positive diagonal entries. The diagonal entries of $D$ are known as the {\it symplectic eigenvalues} of $A$, and the columns of $S$ form a {\it symplectic eigenbasis} of $A$. This result is also referred to as {\it Williamson normal form} in the literature \cite{degosson, dms}.
Symplectic eigenvalues and symplectic matrices are ubiquitous in many areas such as classical Hamiltonian dynamics \cite{arnold1989graduate}, quantum mechanics \cite{dms}, and symplectic topology \cite{hofer}. More recently, it has attracted much attention from matrix analysts \cite{bhatia2015symplectic, bhatia2020schur, mishra2020first, bhatia2021variational, jain2021sums, jm, son2022symplectic, paradan2022horn} and quantum physicists \cite{adesso2004extremal, chen2005gaussian, idel, nicacio2021williamson, hsiang2022entanglement} for its important role in continuous-variable quantum information theory \cite{serafini2017quantum}. For example, any Gaussian state of zero mean vector is obtained by applying to a tensor product of thermal states a unitary map that is characterized by a symplectic matrix \cite{serafini2017quantum}, and the von-Neumann entropy of the Gaussian state is a smooth function of the symplectic eigenvalues of its covariance matrix \cite{p}. 
So, it is of theoretical interest as well as practical importance to study the perturbation of symplectic eigenvalues and symplectic matrices in Williamson's theorem, both of which are closely related to each other. Indeed, the perturbation bound on symplectic eigenvalues of two positive definite matrices $A$ and $B$ obtained in \cite{jm} is derived using symplectic matrices diagonalizing $tA+(1-t)B$ for $t\in [0,1]$. In \cite{idel}, a perturbation of $A$ of the form $A+tH$ was considered for small variable $t > 0$ and a fixed real symmetric matrix $H$. The authors studied the stability of symplectic matrices diagonalizing $A+tH$ in Williamson's theorem and a perturbation bound was obtained for the case of $A$ having non-repeated symplectic eigenvalues. 

In this paper, we study the stability of symplectic matrices in Williamson's theorem diagonalizing $A+H$, where $H$ is an arbitrary $2n \times 2n$ real symmetric matrix such that the perturbed matrix $A+H$ is also positive definite. 
Let $S$ be a fixed symplectic matrix diagonalizing $A$ in Williamson's theorem. 
We show that any symplectic matrix $\tilde{S}$ diagonalizing $A+H$ in Williamson's theorem is of the form $\tilde{S}=S Q+\mathcal{O}(\|H\|)$ such that $Q$ is a $2n \times 2n$ real symplectic as well as orthogonal matrix. Moreover, $Q$ is in \emph{symplectic block diagonal} form with block sizes given by twice the multiplicities of the symplectic eigenvalues of $A$.
Consequently, we prove that $\tilde{S}$ and $S$ can be chosen so that $\|\tilde{S}-S\|=\mathcal{O}(\|H\|)$. Our results hold even if $A$ has repeated symplectic eigenvalues, generalizing the stability result of symplectic matrices corresponding to the case of non-repeated symplectic eigenvalues given in \cite{idel}. We do not provide any perturbation bounds.

The rest of the paper is organized as follows. In Section~\ref{back}, we review some definitions, set notations, and define basic symplectic operations. In
Section~\ref{main_sec}, we detail the findings of this paper. These are given in Proposition~\ref{res1}, Theorem~\ref{res2}, Theorem~\ref{res4}, and Proposition~\ref{res3}.


\section{Background and notations}\label{back}
Let $\operatorname{Sm}(m)$ denote the set of $m \times m$ real symmetric matrices equipped with the spectral norm $\|\cdot \|$, that is, for any $X \in \operatorname{Sm}(m)$, $\|X\|$ is the maximum singular value of $X$.
We also use the same notation $\|\cdot \|$ for the Euclidean norm, and $\langle \cdot, \cdot \rangle$ for the Euclidean inner product on $\mathbb{R}^{m}$ or $\mathbb{C}^m$.
Let $0_{i,j}$ denote the $i \times j$ zero matrix, and let $0_i$ denote the $i\times i$ zero matrix (i.e., $0_i=0_{i,i}$).
We denote the imaginary unit number by $\iota \coloneqq \sqrt{-1}$. 
We use the Big-O notation $Y=\mathcal{O}(\|X\|)$ to denote a matrix $Y$ as a function of $X$ for which there exist positive scalars $c$ and $\delta$ such that $\|Y\| \leq c \|X\|$ for all $X$ with $\|X\| < \delta$.


\subsection{Symplectic matrices and symplectic eigenvalues}
Define $J_2\coloneqq \left(\begin{smallmatrix} 0 & 1 \\ -1 & 0 \end{smallmatrix}\right)$, and let $J_{2n}=J_2 \otimes I_n$ for $n>1$, where  $I_n$ is the $n \times n$ identity matrix.
A $2n \times 2n$ real matrix $S$ is said to be symplectic if $S^TJ_{2n}S=J_{2n}.$
The set of $2n \times 2n$ symplectic matrices, denote by $\operatorname{Sp}(2n)$, forms a group under multiplication called the {\it symplectic group}.   
The symplectic group $\operatorname{Sp}(2n)$ is analogous to the orthogonal group $\operatorname{Or}(2n)$ of $2n \times 2n$ orthogonal matrices in the sense that replacing the matrix $J_{2n}$ with $I_{2n}$ in the definition of symplectic matrices gives the definition of orthogonal matrices.
However, in contrast with the orthogonal group, the symplectic group is non-compact. Also, the determinant of every symplectic matrix is equal to $+1$ which makes the symplectic group a subgroup of the special linear group \cite{dms}.
Let $\operatorname{Pd}(2n) \subset \operatorname{Sm}(2n)$ denote the set of positive definite matrices. Williamson's theorem \cite{williamson1936algebraic} states that for every $A \in \operatorname{Pd}(2n)$,  there exists $S\in \operatorname{Sp}(2n)$ such that $S^TAS=D \oplus D$, where $D$ is an $n\times n$  diagonal matrix. The diagonal elements $d_1(A) \le \cdots \le d_n(A)$ of $D$ are independent of the choice of $S$, and they are known as the symplectic eigenvalues of $A$. 
Denote by $\operatorname{Sp}(2n; A)$ the subset of $\operatorname{Sp}(2n)$ consisting of symplectic matrices that diagonalize $A$ in Williamson's theorem.
Several proofs of Williamson's theorem are available using basic linear algebra (e.g., \cite{degosson, simon1999congruences}).

Denote the set of $2n \times 2n$ {\it orthosymplectic} (orthogonal as well as symplectic) matrices as $\operatorname{OrSp}(2n)\coloneqq \operatorname{Or}(2n) \cap \operatorname{Sp}(2n)$.
Any orthosymplectic matrix $Q \in \operatorname{OrSp}(2n)$ is precisely of the form 
\begin{align}\label{eq:orthosymplectic_matrix_representation}
    Q = \begin{pmatrix}
            X & Y \\
            -Y & X
        \end{pmatrix},
\end{align}
where $X,Y$ are $n\times n$ real matrices such that $X+\iota Y$ is a unitary matrix \cite{bhatia2015symplectic}.
For $m \leq n$, we denote by $\operatorname{Sp}(2n, 2m)$ the set of $2n \times 2m$ matrices $M$ satisfying $M^T J_{2n} M = J_{2m}$. In particular, we have $\operatorname{Sp}(2n, 2n)=\operatorname{Sp}(2n)$.

\subsection{Symplectic block and symplectic direct sum}

Let $m$ be a natural number and $\mathcal{I}, \mathcal{J} \subseteq \{1, \ldots, m\}$. Suppose $M$ is an $m\times m$ matrix. We
denote by  $M_{\mathcal{J}}$ the submatrix of $M$ consisting of the columns of $M$ with indices in $\mathcal{J}$.
Also, denote by $M_{\mathcal{I} \mathcal{J}}$ the $|\mathcal{I}| \times |\mathcal{J}|$ submatrix of $M=[M_{ij}]$ consisting of the elements $M_{ij}$ with indices $i\in \mathcal{I}$ and $j\in \mathcal{J}$. 
Let $T$ be any $2m \times 2m$ matrix given in the block form by
\begin{align*}
    T = 
        \begin{pmatrix}
            W & X \\
            Y & Z
        \end{pmatrix},   
\end{align*}
where $X,Y,W,Z$ are matrices of order $m \times m$. Define a {\it symplectic block} of $T$ as a submatrix of the form
\begin{align*}
         \begin{pmatrix}
            W_{\mathcal{I}\mathcal{J}} & X_{\mathcal{I}\mathcal{J}} \\
            Y_{\mathcal{I}\mathcal{J}} & Z_{\mathcal{I}\mathcal{J}}
        \end{pmatrix}.
\end{align*}
Also, define a {\it symplectic diagonal block} of $T$ as a submatrix of the form
\begin{align*}
         \begin{pmatrix}
            W_{\mathcal{I}\mathcal{I}} & X_{\mathcal{I}\mathcal{I}} \\
            Y_{\mathcal{I}\mathcal{I}} & Z_{\mathcal{I}\mathcal{I}}
        \end{pmatrix}.
\end{align*}
The following example illustrates this.
\begin{example}
    Let $T$ be a $6 \times 6$ matrix given by
\begin{align*}
     T=\left(
    \begin{array}{ccccccc}
     \cline{1-2}\cline{5-6} 
      \multicolumn{1}{|c}{1} & \multicolumn{1}{c|}{2} & 3 & \vdots & \multicolumn{1}{|c}{4} & \multicolumn{1}{c|}{5} & 6 \\ 
      \multicolumn{1}{|c}{7} & \multicolumn{1}{c|}{8} & 9 & \vdots & \multicolumn{1}{|c}{10} & \multicolumn{1}{c|}{11} & 12 \\ \cline{1-2} \cline{5-6} 
      13 & \cellcolor{gray!25} 14 & 15 & \vdots & 16 & \cellcolor{gray!25} 17 & 18 \\ 
      \hdots & \hdots &\hdots &\hdots &\hdots &\hdots &\hdots 
      \\ [-0.1cm] \cline{1-2} \cline{5-6}
      \multicolumn{1}{|c}{19} & \multicolumn{1}{c|}{20} & 21 & \vdots & \multicolumn{1}{|c}{22} & \multicolumn{1}{c|}{23} & 24 \\ 
      \multicolumn{1}{|c}{25} & \multicolumn{1}{c|}{26} & 27 & \vdots & \multicolumn{1}{|c}{28} & \multicolumn{1}{c|}{29} & 30 \\ \cline{1-2} \cline{5-6}
      31 & \cellcolor{gray!25} 32 & 33 & \vdots & 34 & \cellcolor{gray!25} 35 & 36
       \end{array}
    \right).
\end{align*}
A symplectic block of $T$, which corresponds to $\mathcal{I}=\{3\}$ and $\mathcal{J}=\{2\}$, is given by
\begin{align*}
    \left(
    \begin{array}{cc}
      \cellcolor{gray!25} 14 & \cellcolor{gray!25} 17 \\
      \cellcolor{gray!25} 32 & \cellcolor{gray!25} 35
    \end{array}
    \right).
\end{align*}
A symplectic diagonal block, corresponding to $\mathcal{I}=\{1,2\}$, is given by
\begin{align*}
    \left(
    \begin{array}{cccc}\cline{1-2}\cline{3-4}
      \multicolumn{1}{|c}{1} & \multicolumn{1}{c|}{2} & \multicolumn{1}{|c}{4} & \multicolumn{1}{c|}{5} \\
      \multicolumn{1}{|c}{7} & \multicolumn{1}{c|}{8} & \multicolumn{1}{|c}{10} & \multicolumn{1}{c|}{11} \\ \cline{1-2} \cline{3-4}
      \multicolumn{1}{|c}{19} & \multicolumn{1}{c|}{20} & \multicolumn{1}{|c}{22} & \multicolumn{1}{c|}{23} \\ 
      \multicolumn{1}{|c}{25} & \multicolumn{1}{c|}{26} & \multicolumn{1}{|c}{28} & \multicolumn{1}{c|}{29} \\ \cline{1-2} \cline{3-4}
       \end{array}
    \right).
\end{align*}
\end{example}
Let $T'$ be another $2m' \times 2m'$ matrix, given in the block form
\begin{align*}
    T' =
        \begin{pmatrix}
            W' & X' \\
            Y' & Z'
        \end{pmatrix},
\end{align*}
where the blocks $W',X',Y',Z'$ have size $m' \times m'$. Define the {\it symplectic direct sum} of $T$ and $T'$ as
\begin{align*}
    T \oplus^{\operatorname{s}} T' 
        &= 
           \begin{pmatrix}
            W \oplus W' & X \oplus X' \\
            Y \oplus Y' & Z \oplus Z'
        \end{pmatrix}.
\end{align*}
This is illustrated in the following example.
\begin{example}
    Let
    \begin{equation*}
        T=
            \begin{pmatrix}
            \begin{array}{cccc}\cline{1-2}\cline{3-4}
              \multicolumn{1}{|c}{1} & \multicolumn{1}{c|}{2} & \multicolumn{1}{|c}{3} & \multicolumn{1}{c|}{4} \\ 
              \multicolumn{1}{|c}{5} & \multicolumn{1}{c|}{6} & \multicolumn{1}{|c}{7} & \multicolumn{1}{c|}{8} \\ \cline{1-2} \cline{3-4}
              \multicolumn{1}{|c}{9} & \multicolumn{1}{c|}{10} & \multicolumn{1}{|c}{11} & \multicolumn{1}{c|}{12} \\ 
              \multicolumn{1}{|c}{13} & \multicolumn{1}{c|}{14} & \multicolumn{1}{|c}{15} & \multicolumn{1}{c|}{16} \\ \cline{1-2} \cline{3-4}
            \end{array}
            \end{pmatrix},
        T'=
            \left(
                    \begin{array}{cc}
                        \cellcolor{gray!25} 17 & \cellcolor{gray!25} 18 \\
                        \cellcolor{gray!25} 19 & \cellcolor{gray!25} 20
                    \end{array}
            \right).
    \end{equation*}
    We then have
    \begin{align*}
        T \oplus^{\operatorname{s}} T' =
            \begin{pmatrix}
    \begin{array}{ccccccc}\cline{1-2}\cline{5-6}
      \multicolumn{1}{|c}{1} & \multicolumn{1}{c|}{2} & 0 & \vdots & \multicolumn{1}{|c}{3} & \multicolumn{1}{c|}{4} & 0 \\
      \multicolumn{1}{|c}{5} & \multicolumn{1}{c|}{6} & 0 & \vdots & \multicolumn{1}{|c}{7} & \multicolumn{1}{c|}{8} & 0 \\ \cline{1-2} \cline{5-6}
      0 &0& \cellcolor{gray!25} 17 & \vdots & 0 & 0 &  \cellcolor{gray!25} 18 \\
      \hdots & \hdots &\hdots &\hdots &\hdots &\hdots &\hdots 
      \\ \cline{1-2} \cline{5-6}
      \multicolumn{1}{|c}{9} & \multicolumn{1}{c|}{10} & 0 & \vdots & \multicolumn{1}{|c}{11} & \multicolumn{1}{c|}{12} & 0 \\ 
      \multicolumn{1}{|c}{13} & \multicolumn{1}{c|}{14} & 0 & \vdots & \multicolumn{1}{|c}{15} & \multicolumn{1}{c|}{16} & 0 \\ \cline{1-2} \cline{5-6}
      0 & 0 & \cellcolor{gray!25} 19 & \vdots & 0 & 0 & \cellcolor{gray!25} 20
       \end{array}
    \end{pmatrix}.
    \end{align*}
\end{example}
We know that the usual direct sum of two orthogonal matrices is also an orthogonal matrix. It is interesting to note that an analogous property is also satisfied by the symplectic direct sum. If $T \in \operatorname{Sp}(2k)$ and $T' \in \operatorname{Sp}(2\ell)$, then $T \oplus^s T' \in \operatorname{Sp}(2(k+\ell))$. Indeed, we have
\begin{align*}
     &(T \oplus^s T')^T J_{2(k+\ell)} (T \oplus^s T') \\
     &\hspace{0.5cm}= \begin{pmatrix}
            W \oplus W' & X \oplus X' \\
            Y \oplus Y' & Z \oplus Z'
        \end{pmatrix}^T 
        \begin{pmatrix}
            0_{k+\ell} & I_{k+\ell} \\
            -I_{k+\ell} & 0_{k+\ell}
        \end{pmatrix}
        \begin{pmatrix}
            W \oplus W' & X \oplus X' \\
            Y \oplus Y' & Z \oplus Z'
        \end{pmatrix} \\
     &\hspace{0.5cm}= \begin{pmatrix}
            W^T \oplus W'^T & Y^T \oplus Y'^T \\
            X^T \oplus X'^T & Z^T \oplus Z'^T
        \end{pmatrix}
        \begin{pmatrix}
            Y \oplus Y' & Z \oplus Z' \\
            -(W \oplus W') & -(X \oplus X')
        \end{pmatrix} \\
    &\hspace{0.5cm}= \begin{pmatrix}
            W^TY \oplus W'^TY' - Y^TW \oplus Y'^TW' & W^TZ \oplus W'^TZ' - Y^TX \oplus Y'^TX' \\
            X^TY \oplus X'^TY'-  Z^TW \oplus Z'^TW' & X^TZ \oplus X'^TZ' - Z^TX \oplus Z'^TX'
        \end{pmatrix} \\
    &\hspace{0.5cm}= \begin{pmatrix}
            (W^TY- Y^TW) \oplus (W'^TY' - Y'^TW') & (W^TZ- Y^TX) \oplus (W'^TZ'- Y'^TX') \\
            (X^TY-  Z^TW) \oplus (X'^TY' - Z'^TW') & (X^TZ- Z^TX) \oplus (X'^TZ' - Z'^TX')
        \end{pmatrix} \\
    &\hspace{0.5cm}= \begin{pmatrix}
            W^TY- Y^TW  & W^TZ- Y^TX  \\
            X^TY-  Z^TW  & X^TZ- Z^TX
        \end{pmatrix} \oplus^s \begin{pmatrix}
             W'^TY' - Y'^TW' &  W'^TZ'- Y'^TX' \\
             X'^TY' - Z'^TW' & X'^TZ' - Z'^TX'
        \end{pmatrix} \\
    &\hspace{0.5cm}= \begin{pmatrix}
            W^T  & Y^T  \\
            X^T  & Z^T
        \end{pmatrix}
        \begin{pmatrix}
            Y  & Z  \\
            -W & -X
        \end{pmatrix}  \oplus^s 
        \begin{pmatrix}
            W'^T  & Y'^T  \\
            X'^T  & Z'^T
        \end{pmatrix}
        \begin{pmatrix}
            Y'  & Z'  \\
            -W' & -X'
        \end{pmatrix} \\
    &\hspace{0.5cm}= \begin{pmatrix}
            W  & X  \\
            Y  & Z
        \end{pmatrix}^T
        \begin{pmatrix}
            0_k  & I_k  \\
            -I_k  & 0_k
        \end{pmatrix}
        \begin{pmatrix}
            W  & X  \\
            Y & Z
        \end{pmatrix}  \oplus^s 
        \begin{pmatrix}
            W'  & X'  \\
            Y'  & Z'
        \end{pmatrix}^T
        \begin{pmatrix}
            0_{\ell}  & I_\ell  \\
            -I_\ell  & 0_{\ell}
        \end{pmatrix}
        \begin{pmatrix}
            W'  & X'  \\
            Y' & Z'
        \end{pmatrix} \\
    &\hspace{0.5cm}= T^T J_{2k}T \oplus^s T'^T J_{2\ell} T' \\
    &\hspace{0.5cm}= J_{2k} \oplus^s J_{2\ell} \\
    &\hspace{0.5cm}= J_{2(k+\ell)}.
\end{align*}

\subsection{Symplectic concatenation}\label{sym_concatenation}
Let $M=\left(p_1,\ldots, p_{k}, q_1,\ldots,q_k \right)$ and $N=\left(x_1,\ldots, x_{\ell}, y_1,\ldots,y_\ell \right)$ be $2n \times 2k$ and $2n \times 2\ell$ matrices, respectively. Define the {\it symplectic concatenation} of $M$ and $N$ to be the following $2n \times 2(k+\ell)$ matrix
\begin{align*}
    M \diamond N \coloneqq \left(p_1,\ldots, p_{k},x_1,\ldots, x_{\ell},q_1,\ldots,q_k, y_1,\ldots,y_\ell  \right).
\end{align*}
Here is an example to illustrate symplectic concatenation.
\begin{example}
    Let  
    \begin{align*}
        M= \left(
            \begin{array}{cccc}
              \multicolumn{1}{c}{1} & \multicolumn{1}{c|}{2} & \multicolumn{1}{c}{3} & \multicolumn{1}{c}{4} \\
              \multicolumn{1}{c}{5} & \multicolumn{1}{c|}{6} & \multicolumn{1}{c}{7} & \multicolumn{1}{c}{8} \\ 
              \multicolumn{1}{c}{9} & \multicolumn{1}{c|}{10} & \multicolumn{1}{c}{11} & \multicolumn{1}{c}{12} \\ 
              \multicolumn{1}{c}{13} & \multicolumn{1}{c|}{14} & \multicolumn{1}{c}{15} & \multicolumn{1}{c}{16} \\ 
            \end{array}
            \right),
        N= \left(
            \begin{array}{cc}
               \cellcolor{gray!25} 17 & \cellcolor{gray!25} 18 \\
               \cellcolor{gray!25} 19 & \cellcolor{gray!25} 20 \\
               \cellcolor{gray!25} 21 & \cellcolor{gray!25} 22 \\
               \cellcolor{gray!25} 23 & \cellcolor{gray!25} 24 \\
            \end{array}
            \right).    
    \end{align*}
    The symplectic concatenation of $M$ and $N$ is given by 
    \begin{align*}
        M \diamond N = \left(
            \begin{array}{cccccc}
              \multicolumn{1}{c}{1} & \multicolumn{1}{c}{2} &  \cellcolor{gray!25} 17 &
              \multicolumn{1}{|c}{3} & \multicolumn{1}{c}{4} &
              \cellcolor{gray!25} 18 \\
              \multicolumn{1}{c}{5} & \multicolumn{1}{c}{6} &
              \cellcolor{gray!25} 19&
              \multicolumn{1}{|c}{7} & \multicolumn{1}{c}{8} &
              \cellcolor{gray!25} 20 \\ 
              \multicolumn{1}{c}{9} & \multicolumn{1}{c}{10} & 
              \cellcolor{gray!25} 21 &
              \multicolumn{1}{|c}{11} & \multicolumn{1}{c}{12} &
              \cellcolor{gray!25} 22 \\ 
              \multicolumn{1}{c}{13} & \multicolumn{1}{c}{14} &
              \cellcolor{gray!25} 23 &
              \multicolumn{1}{|c}{15} & \multicolumn{1}{c}{16} &
              \cellcolor{gray!25} 24 
            \end{array}
            \right).
    \end{align*}
\end{example}
Suppose that $M \in \operatorname{Sp}(2n, 2k)$ and $N \in \operatorname{Sp}(2n, 2\ell)$. Let us derive a necessary and sufficient condition on $M$ and $N$ for $k+\ell \leq n$ such that $M \diamond N \in \operatorname{Sp}(2n, 2(k+\ell))$. This will be useful later.
We have
\begin{align}
    (M \diamond N)^T J_{2n} (M \diamond N) &= \left((M \diamond N)^T J_{2n} M \right) \diamond \left((M \diamond N)^T J_{2n}N\right) \nonumber \\
    &=\left(M^T J_{2n}^T (M \diamond N) \right)^T \diamond \left(N^TJ_{2n}^T(M \diamond N)\right)^T  \nonumber \\
    &=\left((M^T J_{2n}^T M) \diamond (M^T J_{2n}^T N) \right)^T \diamond \left((N^TJ_{2n}^TM) \diamond (N^TJ_{2n}^TN)\right)^T \nonumber \\
    &= \left(J_{2k}^T \diamond (M^T J_{2n}^T N) \right)^T \diamond \left((N^TJ_{2n}^TM) \diamond J_{2\ell}^T\right)^T. \label{sym_concat}
\end{align}
We also observe that
\begin{align}
    J_{2(k+\ell)} &= \left(J_{2k}^T \diamond 0_{2k, 2\ell} \right)^T \diamond \left(0_{2\ell, 2k} \diamond J_{2\ell}^T\right)^T.\label{standard_sym_concat}
\end{align}
 By comparing \eqref{sym_concat} and \eqref{standard_sym_concat}, we deduce that $M \diamond N \in \operatorname{Sp}(2n, 2(k+\ell))$ if and only if $M^T J_{2n} N=0_{2k, 2\ell}$.


\section{Main results}\label{main_sec}
We fix the following notations throughout the paper. Let $A \in \operatorname{Pd}(2n)$ with distinct symplectic eigenvalues $\mu_1 < \cdots < \mu_r$.
 For all $i=1, \ldots, r$, define sets
\begin{align*}
\alpha_i    
        &\coloneqq \{j : d_j(A) = \mu_i, j= 1, \ldots, n\}, \\
\beta_i     
        &\coloneqq \{j+n: j \in \alpha_i\}, \\
\gamma_i    
        &\coloneqq \alpha_i \cup \beta_i.
\end{align*}  
An example to illustrate these sets is as follows.
\begin{example}
    Suppose $A \in \operatorname{Pd}(20)$ with symplectic eigenvalues $1,1,2,3,3,3,4,4,4,5$. We have $\mu_1=1, \mu_2=2, \mu_3=3, \mu_4=4, \mu_5=5$. Also $\alpha_1=\{1,2\}$, $\alpha_2=\{3\}, \alpha_3=\{4,5,6\}, \alpha_4=\{7,8,9\}, \alpha_5=\{10\}$. Note that $n=10$, so we have $\beta_1=\{11,12\}$, $\beta_2=\{13\},$ $\beta_3=\{14,15,16\}$, $\beta_4=\{17,18,19\}$, $\beta_5=\{20\}$. We thus also get $\gamma_1=\{1,2,11,12\}$, $\gamma_2=\{3,13\}$, $\gamma_3=\{4,5,6,14,15,16\}$, $\gamma_4=\{7,8,9,17,18,19\}$, $\gamma_5=\{10,20\}$.
\end{example}

\begin{proposition}\label{res1}
    Let $A \in \operatorname{Pd}(2n)$ and $H\in \operatorname{Sm}(2n)$ such that $A + H \in \operatorname{Pd}(2n).$ Let $S\in \operatorname{Sp}(2n; A)$ and $\tilde{S} \in \operatorname{Sp}(2n; A+H)$. For $1\leq i\neq j\leq r$,  we have
    \begin{align}
        \left(S^{-1}\tilde{S}\right)_{\gamma_i \gamma_j}    
            &=\mathcal{O}(\|H \|), \label{nq5}\\
        \left(S^{-1}\tilde{S}\right)_{\alpha_i \alpha_i}    
            &=\left(S^{-1}\tilde{S}\right)_{\beta_i \beta_i} + \mathcal{O}(\|H \|), \label{nq6} \\
        \left(S^{-1}\tilde{S}\right)_{\alpha_i \beta_i}      
            &=-\left(S^{-1}\tilde{S}\right)_{\beta_i \alpha_i} + \mathcal{O}(\|H \|), \label{nq7} \\
        \left(S^{-1}\tilde{S}\right)_{\gamma_i \gamma_i}^T \left(S^{-1}\tilde{S}\right)_{\gamma_i \gamma_i} 
            &= I_{2|\alpha_i|} +  \mathcal{O}(\|H\|),\label{nq9}\\
        \left(S^{-1}\tilde{S}\right)_{\gamma_i \gamma_i}^T J_{2|\alpha_i|} \left(S^{-1}\tilde{S}\right)_{\gamma_i \gamma_i} 
            &= J_{2|\alpha_i|} +  \mathcal{O}(\|H\|^2). \label{nq8}
    \end{align}
\end{proposition}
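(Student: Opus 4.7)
The plan is to set $M \coloneqq S^{-1}\tilde{S}\in\operatorname{Sp}(2n)$ and to deduce all five claims from a single approximate commutator identity, combined with the exact symplectic relation $M^T J_{2n} M = J_{2n}$ and an approximate congruence $M^T(D\oplus D)M = D\oplus D + \mathcal{O}(\|H\|)$.

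First I would rewrite Williamson's theorem in a ``Hamiltonian'' form. Combining $S^T A S = D\oplus D$ with the symplectic identity $S^T J_{2n} S = J_{2n}$ yields $S^{-1}(J_{2n} A)S = J_{2n}(D\oplus D) \eqqcolon \Xi$, and likewise $\tilde{S}^{-1} J_{2n}(A+H)\tilde{S} = J_{2n}(\tilde{D}\oplus\tilde{D}) \eqqcolon \tilde{\Xi}$. Subtracting these and rearranging gives
\begin{align*}
    M\tilde{\Xi} - \Xi M = S^{-1} J_{2n} H \tilde{S}.
\end{align*}
Invoking the known Lipschitz perturbation estimate for symplectic eigenvalues, $\tilde{D} = D + \mathcal{O}(\|H\|)$ (hence $\tilde{\Xi} = \Xi + \mathcal{O}(\|H\|)$), together with the uniform boundedness of $\|\tilde{S}\|$ as $\|H\|\to 0$, one obtains $[\Xi, M] = \mathcal{O}(\|H\|)$. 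The same two ingredients applied to $\tilde{S}^T(A+H)\tilde{S} = \tilde{D}\oplus\tilde{D}$ yield $M^T(D\oplus D)M = D\oplus D + \mathcal{O}(\|H\|)$.

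The second step is to exploit the block structure of $\Xi$, $J_{2n}$ and $D\oplus D$ with respect to the partition $\{\gamma_i\}_{i=1}^r$. Since $J_{2n}$ swaps index $k$ with $k+n$ it interchanges $\alpha_i\leftrightarrow\beta_i$ and preserves $\operatorname{span}\{e_k : k\in\gamma_i\}$; because $(D\oplus D)|_{\gamma_i} = \mu_i I_{2|\alpha_i|}$, both $J_{2n}$ and $\Xi$ are block-diagonal in the $\gamma$-partition, with $\Xi|_{\gamma_i} = \mu_i J_{2|\alpha_i|}$ after listing the indices of $\gamma_i$ in the order $\alpha_i$ followed by $\beta_i$. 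Reading off the $(\gamma_i,\gamma_j)$-block of $[\Xi, M]=\mathcal{O}(\|H\|)$ gives the Sylvester-type identity
\begin{align*}
    \mu_i J_{2|\alpha_i|}\, M_{\gamma_i\gamma_j} - M_{\gamma_i\gamma_j}\, \mu_j J_{2|\alpha_j|} = \mathcal{O}(\|H\|).
\end{align*}
For $i\neq j$ the spectra $\{\pm\iota\mu_i\}$ and $\{\pm\iota\mu_j\}$ are disjoint (as the $\mu_k$ are positive and distinct), so Sylvester's theorem inverts this operator with an $H$-independent bound, yielding \eqref{nq5}. For $i=j$ the identity collapses to $[J_{2|\alpha_i|}, M_{\gamma_i\gamma_i}]=\mathcal{O}(\|H\|)$; expanding $M_{\gamma_i\gamma_i}$ into four $|\alpha_i|\times|\alpha_i|$ sub-blocks indexed by $\alpha_i$ and $\beta_i$, the commutator simplifies to \eqref{nq6} and \eqref{nq7} by direct computation.

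Lastly I would read off the $(\gamma_i,\gamma_i)$-block of each of $M^T(D\oplus D)M = D\oplus D + \mathcal{O}(\|H\|)$ and the exact $M^T J_{2n} M = J_{2n}$. Because $J_{2n}$ and $D\oplus D$ are both block-diagonal in $\{\gamma_i\}$, each such block reduces to a sum $\sum_k M_{\gamma_k\gamma_i}^T (\cdot)|_{\gamma_k} M_{\gamma_k\gamma_i}$, and thanks to \eqref{nq5} every summand with $k\neq i$ is $\mathcal{O}(\|H\|^2)$. The first identity then yields $\mu_i M_{\gamma_i\gamma_i}^T M_{\gamma_i\gamma_i} = \mu_i I_{2|\alpha_i|} + \mathcal{O}(\|H\|)$, i.e.\ \eqref{nq9}; the second yields \eqref{nq8}, the error improving by one order to $\mathcal{O}(\|H\|^2)$ precisely because the underlying symplectic identity $M^T J_{2n} M = J_{2n}$ is exact rather than approximate. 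The main obstacle I anticipate is the first step: securing the Lipschitz perturbation bound for symplectic eigenvalues and a $H$-uniform control on $\|\tilde{S}\|$, so that the implicit constants in the $\mathcal{O}(\|H\|)$ commutator estimate do not degenerate; everything downstream is routine block arithmetic and a single Sylvester invertibility argument.
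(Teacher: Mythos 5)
Your proposal is correct and follows the same overall strategy as the paper (reduce to $A=D\oplus D$, $S=I_{2n}$; cite the Lipschitz bound for symplectic eigenvalues to get $\tilde D=D+\mathcal O(\|H\|)$; control $\|\tilde S\|$ uniformly for small $\|H\|$; block-compare a $J$-conjugation relation against the exact symplectic constraint). The differences are presentational but worth noting. For \eqref{nq5}--\eqref{nq7}, you package the key relation as the commutator $[\Xi,M]=\mathcal O(\|H\|)$ with $\Xi=J_{2n}(D\oplus D)$ and invert the resulting Sylvester map via disjointness of the spectra $\{\pm\iota\mu_i\}$, $\{\pm\iota\mu_j\}$; the paper instead writes the same relation as $\mu_i\tilde S_{\gamma_i\gamma_j}=\mu_j J\tilde S_{\gamma_i\gamma_j}J^T+\mathcal O(\|H\|)$ and explicitly adds/subtracts it with its $J$-conjugate, then divides by $\mu_i\pm\mu_j$ --- i.e.\ it inverts the Sylvester operator by hand. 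These are the same computation; your version is more conceptual and terse, the paper's is more self-contained. For \eqref{nq9}, your route is genuinely cleaner: you extract the $(\gamma_i,\gamma_i)$-block of $M^T(D\oplus D)M=D\oplus D+\mathcal O(\|H\|)$ directly, using \eqref{nq5} to kill the cross terms up to $\mathcal O(\|H\|^2)$, and divide by $\mu_i>0$. The paper instead first proves \eqref{nq8}, then combines it with \eqref{nq6}--\eqref{nq7} in the form $J\tilde S_{\gamma_i\gamma_i}J^T=\tilde S_{\gamma_i\gamma_i}+\mathcal O(\|H\|)$ to eliminate the $J$'s. Your derivation makes \eqref{nq9} independent of \eqref{nq6}--\eqref{nq8} and avoids the extra substitution step. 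The two ``obstacles'' you flag --- the Lipschitz bound $\tilde D=D+\mathcal O(\|H\|)$ and the uniform bound on $\|\tilde S\|$ --- are exactly the two facts the paper supplies: the former is cited from Idel, Gaona, and Wolf (Theorem 3.1), and the latter is a short condition-number estimate $\|\tilde S\|^2\le 2\kappa(A+H)$ combined with continuity of $\kappa$; both are routine, so your proof stands as written once those are filled in.
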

\begin{proof}
It suffices to prove the assertions for $A$ in the diagonal form $A=D \oplus D$ and $S=I_{2n}$.
For any $\tilde{S} \in \operatorname{Sp}(2n; A+H)$, we have
\begin{align}\label{new2eqn1}
    \tilde{S}^T(A+H)\tilde{S} = \tilde{D} \oplus \tilde{D},
\end{align}
where $\tilde{D}$ is the diagonal matrix with entries $d_1(A+H) \leq \cdots \leq d_n(A+H)$.
By Theorem $3.1$ of \cite{idel}, we get 
\begin{align}\label{new2eqn2}
    \tilde{D}=D+\mathcal{O}(\|H\|).
\end{align} 
By \eqref{new2eqn1} and \eqref{new2eqn2}, and using the diagonal form $A=D \oplus D$, we get
\begin{align}\label{addedeqn2}
    \tilde{S}^T(A+H)\tilde{S} = A + \mathcal{O}(\|H\|).
\end{align}
The symplectic matrix $\tilde{S}$ satisfies
\begin{align*}
    \|\tilde{S}\|^2
        &= \|(A+H)^{-1/2}(A+H)^{1/2}\tilde{S} \|^2 \\
        &\leq \|(A+H)^{-1/2}\|^2 \|(A+H)^{1/2}\tilde{S} \|^2 \\
        &= \|(A+H)^{-1}\| \| \tilde{S}^T(A+H)\tilde{S} \|\\
        &=2\|(A+H)^{-1}\| d_{1}(A+H)\\
        &\leq 2\|(A+H)^{-1}\| \|A+H\| = 2\kappa(A+H),
\end{align*}
where $\kappa(T)=\|T\|\|T^{-1}\|$ is the condition number of an invertible matrix $T$, and we used \cite[Lemma 2.2 (iii)]{jm} in the last inequality.
It thus implies that $\|\tilde{S}\|$ is uniformly bounded for small $\|H\|$, which follows from the continuity of $\kappa$.
So, from \eqref{addedeqn2} and the symplectic relation $\tilde{S}^{-T}=J_{2n}\tilde{S}J_{2n}^T$, we get
\begin{equation}\label{addedeqn4}
A\tilde{S} = J_{2n}\tilde{S}J_{2n}^TA+ \mathcal{O}(\|H\|).
\end{equation}
Consider $\tilde{S}$ in the block matrix form:
\begin{align*}
    \tilde{S}= \begin{pmatrix}
            \tilde{W} & \tilde{X} \\
            \tilde{Y} & \tilde{Z}
        \end{pmatrix},
\end{align*}
where each block $\tilde{W}, \tilde{X}, \tilde{Y}, \tilde{Z}$ has size $n \times n$.
From \eqref{addedeqn4} and using the fact $A=D \oplus D$, we get
\begin{align}
    \begin{pmatrix}
            D\tilde{W} & D\tilde{X} \\
            D\tilde{Y} & D\tilde{Z}
        \end{pmatrix} 
        &= \begin{pmatrix}
            0_n & I_n \\
            -I_n & 0_n
        \end{pmatrix} 
        \begin{pmatrix}
            \tilde{W} & \tilde{X} \\
            \tilde{Y} & \tilde{Z}
        \end{pmatrix}
        \begin{pmatrix}
            0_n & -I_n \\
            I_n & 0_n
        \end{pmatrix} 
        \begin{pmatrix}
            D & 0_n \\
            0_n & D
        \end{pmatrix} + \mathcal{O}(\|H\|) \nonumber \\
        &= \begin{pmatrix}
            \tilde{Z}D & -\tilde{Y}D \\
            -\tilde{X}D & \tilde{W}D
        \end{pmatrix} + \mathcal{O}(\|H\|). \label{per_sym_rel}
\end{align}
Now, using the representation $D = \mu_1 I_{|\alpha_1|} \oplus \cdots \oplus \mu_r I_{|\alpha_r|}$, and comparing the corresponding blocks on both sides in \eqref{per_sym_rel}, we get for all $1 \leq i,j \leq r$,
\begin{align}
    \begin{pmatrix}
            \mu_i \tilde{W}_{\alpha_i \alpha_j} & \mu_i \tilde{X}_{\alpha_i \alpha_j} \\
            \mu_i \tilde{Y}_{\alpha_i \alpha_j} & \mu_i \tilde{Z}_{\alpha_i \alpha_j}
        \end{pmatrix} 
        &= \begin{pmatrix}
            \mu_j \tilde{Z}_{\alpha_i \alpha_j} & -\mu_j \tilde{Y}_{\alpha_i \alpha_j} \\
            -\mu_j \tilde{X}_{\alpha_i \alpha_j} & \mu_j \tilde{W}_{\alpha_i \alpha_j}
        \end{pmatrix} + \mathcal{O}(\|H\|). \label{per_sym_rel_2}
\end{align}
This can be equivalently represented as
\begin{align}
\mu_i \tilde{S}_{\gamma_i \gamma_j} 
    &= \mu_j J_{2|\alpha_i|}\tilde{S}_{\gamma_i \gamma_j}J^T_{2|\alpha_j|}+ \mathcal{O}(\|H\|).\label{nq1}
\end{align}
This also gives
\begin{align}
\mu_j \tilde{S}_{\gamma_i \gamma_j} 
    &= \mu_i J_{2|\alpha_i|}\tilde{S}_{\gamma_i \gamma_j} J^T_{2|\alpha_j|}+ \mathcal{O}(\|H\|).\label{nq2}
\end{align}
Adding \eqref{nq1} and \eqref{nq2}, and then dividing by $\mu_i+\mu_j$, gives 
\begin{align}\label{nq3}
\tilde{S}_{\gamma_i \gamma_j} 
    &= J_{2|\alpha_i|}\tilde{S}_{\gamma_j \gamma_j}J^T_{2|\alpha_j|}+ \mathcal{O}(\|H\|).
\end{align}
Suppose we have $i\neq j$. This implies $\mu_i \neq \mu_j.$ By subtracting \eqref{nq2} from \eqref{nq1}, and then dividing by $\mu_i-\mu_j$, we then get 
\begin{align}\label{nq4}
\tilde{S}_{\gamma_i \gamma_j} 
    &=- J_{2|\alpha_i|}\tilde{S}_{\gamma_i \gamma_j}J^T_{2|\alpha_j|}+ \mathcal{O}(\|H\|).
\end{align}
 By adding \eqref{nq3} and \eqref{nq4}, we get $\tilde{S}_{\gamma_i \gamma_j}=\mathcal{O}(\|H\|)$.
 This settles \eqref{nq5}.

We get \eqref{nq6} and \eqref{nq7} directly as a consequence of \eqref{per_sym_rel_2} by taking $i=j$.  

By the symplectic relation $\tilde{S}^TJ_{2n}\tilde{S}=J_{2n},$ we get 
\begin{align}
J_{2|\alpha_i|} 
    &= \tilde{S}^T_{\gamma_i}J_{2n}\tilde{S}_{\gamma_i} \nonumber \\ 
    &= \sum_{k=1}^r \tilde{S}_{\gamma_k\gamma_i}^T J_{2|\alpha_k|} \tilde{S}_{\gamma_k\gamma_i} \nonumber \\
    &= \tilde{S}_{\gamma_i\gamma_i}^T J_{2|\alpha_i|} \tilde{S}_{\gamma_i\gamma_i} + \sum_{k\neq i, k=1}^r \tilde{S}_{\gamma_k\gamma_i}^T J_{2|\alpha_k|} \tilde{S}_{\gamma_k\gamma_i}.\label{new3eqn1}
\end{align}
We know by \eqref{nq5} that $\tilde{S}_{\gamma_k\gamma_i} = \mathcal{O}(\|H\|)$ for all $k \neq i$. Using this in the second term of \eqref{new3eqn1}, we get
\begin{align}
    J_{2|\alpha_i|} = \tilde{S}_{\gamma_i\gamma_i}^T J_{2|\alpha_i|} \tilde{S}_{\gamma_i\gamma_i} +  \mathcal{O}(\|H\|^2). \label{pert_sym_relation}
\end{align}
This implies \eqref{nq8}.
The relation \eqref{pert_sym_relation} also gives
\begin{equation}\label{addedeqn7}
\tilde{S}_{\gamma_i \gamma_i}^T J_{2|\alpha_i|} \tilde{S}_{\gamma_i \gamma_i} J_{2|\alpha_i|}^T
    =I_{2|\alpha_i|} +  \mathcal{O}(\|H\|^2).
\end{equation}
The two relations \eqref{nq6} and \eqref{nq7} can be combined and expressed as 
\begin{equation}\label{addedeqn8}
J_{2|\alpha_i|} \tilde{S}_{\gamma_i \gamma_i} J_{2|\alpha_i|}^T=\tilde{S}_{\gamma_i \gamma_i}+\mathcal{O}(\|H\|).
\end{equation}
Substituting \eqref{addedeqn8} in \eqref{addedeqn7} gives
\begin{align*}
    \tilde{S}_{\gamma_i \gamma_i}^T  \tilde{S}_{\gamma_i \gamma_i} =I_{2|\alpha_i|}+  \mathcal{O}(\|H\|).
\end{align*}
This proves the remaining assertion \eqref{nq9}.
\end{proof}

\begin{remark}
    By taking $H=0_{2n,2n}$ in Proposition~\ref{res1}, we observe that $\left(S^{-1}\tilde{S}\right)_{\gamma_i \gamma_j}=0_{2|\alpha_i|, 2|\alpha_j|}$ for $i\neq j$, and that $Q_{[i]}\coloneqq \left(S^{-1}\tilde{S}\right)_{\gamma_i \gamma_i}$ is orthosymplectic for all $i$. This implies $\tilde{S}=S Q$, where $Q=Q_{[1]} \oplus^{\operatorname{s}} \cdots \oplus^{\operatorname{s}} Q_{[r]}$ is orthosymplectic.
    The following result generalizes this observation for arbitrary $H \to 0_{2n}$.
\end{remark}

\begin{theorem}\label{res2}
Let $A \in \operatorname{Pd}(2n)$ and $H\in \operatorname{Sm}(2n)$ such that $A + H \in \operatorname{Pd}(2n).$ Let $S\in \operatorname{Sp}(2n; A)$ and $\tilde{S} \in \operatorname{Sp}(2n; A+H)$ be arbitrary. Then there exists an orthosymplectic matrix $Q$ of the form 
\begin{align*}
    Q=Q_{[1]}\oplus^{\operatorname{s}} \cdots \oplus^{\operatorname{s}} Q_{[r]},
\end{align*} 
where $Q_{[i]} \in \operatorname{OrSp}(2|\alpha_i|)$  for all $i=1,\ldots,r$, satisfying 
\begin{align*}
    \tilde{S}=SQ+\mathcal{O}(\|H\|).
\end{align*} 
\end{theorem}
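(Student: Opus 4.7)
The plan is to study $M := S^{-1}\tilde{S}$ and exploit the block structure given by Proposition~\ref{res1}. The off-diagonal symplectic blocks $M_{\gamma_i\gamma_j}$ with $i\neq j$ are already $\mathcal{O}(\|H\|)$ by \eqref{nq5}, so they match the zero off-diagonal blocks of the symplectic direct sum that we aim to construct. It therefore suffices, for each $i$, to produce a genuine orthosymplectic matrix $Q_{[i]}\in\operatorname{OrSp}(2|\alpha_i|)$ within $\mathcal{O}(\|H\|)$ of the diagonal block $M_{[i]}:=M_{\gamma_i\gamma_i}$; the symplectic direct sum $Q:=Q_{[1]}\oplus^{\operatorname{s}}\cdots\oplus^{\operatorname{s}}Q_{[r]}$ will then lie in $\operatorname{OrSp}(2n)$ by the computation at the end of Section~\ref{back} (the same argument handles orthogonality), and will satisfy $M=Q+\mathcal{O}(\|H\|)$, giving $\tilde{S}=SQ+\mathcal{O}(\|H\|)$.

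Writing $M_{[i]}=\begin{pmatrix}W_i & X_i\\ Y_i & Z_i\end{pmatrix}$, I would first symmetrize using \eqref{nq6}--\eqref{nq7} by setting
\[
N_{[i]} := \begin{pmatrix}\tfrac{W_i+Z_i}{2} & \tfrac{X_i-Y_i}{2}\\ -\tfrac{X_i-Y_i}{2} & \tfrac{W_i+Z_i}{2}\end{pmatrix},
\]
so that $N_{[i]}=M_{[i]}+\mathcal{O}(\|H\|)$ and $N_{[i]}$ lies exactly in the orthosymplectic form \eqref{eq:orthosymplectic_matrix_representation}. Combining with \eqref{nq9} also gives $N_{[i]}^TN_{[i]}=I_{2|\alpha_i|}+\mathcal{O}(\|H\|)$.

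The main technical point is to project $N_{[i]}$ onto $\operatorname{OrSp}(2|\alpha_i|)$ without losing this block structure. For this I would use the standard embedding $\phi:\mathbb{C}^{|\alpha_i|\times|\alpha_i|}\to\mathbb{R}^{2|\alpha_i|\times 2|\alpha_i|}$ defined by $\phi(A+\iota B)=\begin{pmatrix}A & B\\ -B & A\end{pmatrix}$, which is a $*$-homomorphism sending conjugate transposes to transposes, unitaries to orthosymplectic matrices, and positive Hermitian matrices to positive symmetric matrices. Writing $N_{[i]}=\phi(C_i)$ with $C_i:=\tfrac{W_i+Z_i}{2}+\iota\,\tfrac{X_i-Y_i}{2}$, the condition $N_{[i]}^TN_{[i]}=I+\mathcal{O}(\|H\|)$ becomes $C_i^*C_i=I+\mathcal{O}(\|H\|)$. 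Taking the complex polar decomposition $C_i=U_iP_i$ with $U_i$ unitary and $P_i=(C_i^*C_i)^{1/2}$, continuity of the square root near the identity gives $P_i=I+\mathcal{O}(\|H\|)$, and since $\|C_i\|$ is uniformly bounded (as $C_i^*C_i\to I$), we obtain $U_i=C_iP_i^{-1}=C_i+\mathcal{O}(\|H\|)$.

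Setting $Q_{[i]}:=\phi(U_i)\in\operatorname{OrSp}(2|\alpha_i|)$ then yields $Q_{[i]}=N_{[i]}+\mathcal{O}(\|H\|)=M_{[i]}+\mathcal{O}(\|H\|)$, which together with \eqref{nq5} closes the argument via $S^{-1}\tilde{S}=Q+\mathcal{O}(\|H\|)$. The primary obstacle is precisely the structure-preserving projection step: a naive polar decomposition of $M_{[i]}$ would only furnish an orthogonal—not necessarily symplectic—approximant, and the homomorphism $\phi$ is exactly what forces the block-diagonal orthosymplectic shape of $Q$ predicted by the remark preceding the theorem.
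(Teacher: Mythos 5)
Your argument is correct and reaches the same conclusion as the paper, with the same overall skeleton: use Proposition~\ref{res1} to isolate the diagonal symplectic blocks $M_{\gamma_i\gamma_i}$, note they are approximately in the shape \eqref{eq:orthosymplectic_matrix_representation} and approximately orthogonal, pass through the complex representation $\phi$, and replace the nearly-unitary complex matrix by a genuine unitary within $\mathcal{O}(\|H\|)$. The one genuine technical divergence is in that last ``orthonormalization'' step: the paper applies Gram--Schmidt to the columns of $\tilde{S}_{\alpha_i\alpha_i}+\iota\,\tilde{S}_{\alpha_i\beta_i}$, verifying by a finite induction on columns that each orthonormalized vector stays $\mathcal{O}(\|H\|)$-close to the original, whereas you take the polar decomposition $C_i=U_iP_i$ and conclude in one stroke from $P_i=(C_i^*C_i)^{1/2}=I+\mathcal{O}(\|H\|)$ that $U_i=C_i+\mathcal{O}(\|H\|)$. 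Both choices work; polar decomposition is arguably cleaner because it outputs the unitary nearest to $C_i$ in the Frobenius norm and avoids the column-by-column bookkeeping, and your explicit preliminary symmetrization $N_{[i]}=\phi(C_i)$ with $C_i=\tfrac{W_i+Z_i}{2}+\iota\tfrac{X_i-Y_i}{2}$ makes the $*$-homomorphism structure transparent (the paper implicitly uses the non-symmetrized $W_i+\iota X_i$ instead, which is equivalent up to $\mathcal{O}(\|H\|)$ by \eqref{nq6}--\eqref{nq7}). Gram--Schmidt, on the other hand, is more elementary and is the tool the paper also needs in spirit for Proposition~\ref{res3}, where a symplectic (rather than orthosymplectic) Gram--Schmidt-type procedure via ESR is used.
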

\begin{proof}
There is no loss of generality in assuming that $A$ has the diagonal form $A=D \oplus D$ and $S=I_{2n}$.
With this assumption, Proposition~\ref{res1} gives the following representation of $\tilde{S}$ in terms of a symplectic direct sum:
\begin{align}\label{new4eqn1}
    \tilde{S} =  \oplus^{\operatorname{s}}_i \begin{pmatrix} \tilde{S}_{\alpha_i \alpha_i} & \tilde{S}_{\alpha_i \beta_i}  \\ -\tilde{S}_{\alpha_i \beta_i}  & \tilde{S}_{\alpha_i \alpha_i} \end{pmatrix}+\mathcal{O}(\|H\|).
\end{align}
Our strategy is to apply the Gram-Schmidt orthonormalization process to the columns of $\tilde{S}_{\alpha_i \alpha_i}+\iota\tilde{S}_{\alpha_i \beta_i}$ to obtain a unitary matrix of the form $U_{[i]}+\iota V_{[i]}$, where $U_{[i]}$ and $V_{[i]}$ are real matrices,
and then use the representation \eqref{eq:orthosymplectic_matrix_representation} to obtain orthosymplectic matrix $Q_{[i]}$.

Let $x_1,\ldots, x_{|\alpha_i|}$ and $y_1,\ldots, y_{|\alpha_i|}$ be the columns of $\tilde{S}_{\alpha_i \alpha_i}$ and $\tilde{S}_{\alpha_i \beta_i}$ respectively.
Now, apply the Gram-Schmidt orthonormalization process to the complex vectors $x_1+\iota y_1,\ldots, x_{|\alpha_i|}+\iota y_{|\alpha_i|}.$
Let $z_1=x_1+\iota y_1$. Choose $w_1=z_1/\|z_1\|\equiv u_1+\iota v_1$.
By \eqref{nq8} and \eqref{nq9}, we have 
\begin{align*}
    \|z_1\|^2   &=\|x_1\|^2+\|y_1\|^2 \\
                &= \left\|\begin{pmatrix}
                    x_1 \\ -y_1
                \end{pmatrix}\right\|^2 
                =1+\mathcal{O}(\|H\|).
\end{align*}
 This implies
 \begin{align*}
     w_1 = z_1+\mathcal{O}(\|H\|)=x_1+\iota y_1+\mathcal{O}(\|H\|).
 \end{align*}
Let $z_{2}= x_2+\iota y_2 -  \langle w_1, x_2+\iota y_2 \rangle w_1$. Choose $w_{2}=z_{2}/\|z_{2}\|\equiv u_2+\iota v_2$ so that $\{w_1,w_2\}$ is an orthonormal set.
By \eqref{nq8} and \eqref{nq9}, we have 
$\langle x_1+\iota y_1, x_2+\iota y_2\rangle = \mathcal{O}(\|H\|)$. This implies
\begin{align*}
    z_{2}   &=  x_2+\iota x_2 -  \langle w_1, x_2+\iota y_2 \rangle             w_1 \\
            &=  y_2+\iota y_2 -  \langle x_1+\iota y_1, x_2+\iota y_2 \rangle w_1 + \mathcal{O}(\|H\|) \\
            &=  x_2+\iota y_2+\mathcal{O}(\|H\|).
\end{align*}
Again, by \eqref{nq8} and \eqref{nq9}, we have $\|z_2\|=1+\mathcal{O}(\|H\|)$, which implies $w_{2}=x_2+\iota y_2+\mathcal{O}(\|H\|)$.
    
By continuing with the Gram-Schmidt process, we get orthonormal vectors $\{w_1, \ldots, w_{2|\alpha_i|}\}=\{u_1+\iota v_1,\ldots, u_{|\alpha_i|}+\iota v_{|\alpha_i|}\}$ such that for all $j=1,\ldots, |\alpha_i|$,
\begin{align}\label{unitary_bigo}
    u_j+\iota v_j =  x_j+\iota y_j+\mathcal{O}(\|H\|).
\end{align}
Let $U_{[i]} \coloneqq [u_1,\ldots, u_{|\alpha_i|}]$, $V_{[i]} \coloneqq [v_1,\ldots, v_{|\alpha_i|}]$ so that $U_{[i]}+\iota V_{[i]}$ is a unitary matrix.
 By \eqref{eq:orthosymplectic_matrix_representation} it then follows that the following matrix 
\begin{align*}
    Q_{[i]}\coloneqq \begin{pmatrix}
        U_{[i]} & V_{[i]} \\ -V_{[i]} & U_{[i]}
        \end{pmatrix}
\end{align*}
is orthosymplectic. 
The relation \eqref{unitary_bigo} thus gives
\begin{align*}
    Q_{[i]}=\begin{pmatrix}
         \tilde{S}_{\alpha_i \alpha_i} & \tilde{S}_{\alpha_i \beta_i}  \\ -\tilde{S}_{\alpha_i \beta_i}  & \tilde{S}_{\alpha_i \alpha_i} \end{pmatrix}+\mathcal{O}(\|H\|).
\end{align*} 
This combined with \eqref{new4eqn1} gives $\tilde{S} = Q+\mathcal{O}(\|H\|)$ where $Q=Q_{[1]} \oplus^{\operatorname{s}} \cdots \oplus^{\operatorname{s}} Q_{[r]}$, which
 completes the proof.
\end{proof}

The matrix $SQ$ in Theorem~\ref{res2} characterizes the set $\operatorname{Sp}(2n; A)$. We state this in the following proposition, proof of which follows directly from
Corollary~5.3 of \cite{jm}. It is also stated as Theorem~3.5 in \cite{son2021symplectic}.
\begin{proposition}\label{sp2na}
    Let $S \in \operatorname{Sp}(2n; A)$ be fixed. Every symplectic matrix $\hat{S} \in \operatorname{Sp}(2n; A)$ is precisely of the form
    \begin{align*}
         \hat{S}=SQ,
    \end{align*}
    where $Q=Q_{[1]} \oplus^{\operatorname{s}} \cdots \oplus^{\operatorname{s}} Q_{[r]}$ such that $Q_{[i]}\in \operatorname{OrSp}(2|\alpha_i|)$ for all $i=1,\ldots, r$.
\end{proposition}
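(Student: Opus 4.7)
The statement has two directions, and I would prove them separately.

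\textbf{Necessity.} Given any $\hat{S}\in \operatorname{Sp}(2n;A)$, I want to show $Q\coloneqq S^{-1}\hat{S}$ has the claimed symplectic-block-diagonal orthosymplectic structure. This is exactly the situation of Proposition~\ref{res1} with the perturbation $H=0_{2n}$: both $S$ and $\hat{S}$ diagonalize the same matrix $A$. Substituting $H=0_{2n}$ into \eqref{nq5}--\eqref{nq9} kills every $\mathcal{O}(\|H\|)$ and $\mathcal{O}(\|H\|^2)$ term, yielding exactly $Q_{\gamma_i\gamma_j}=0$ for $i\neq j$, together with $Q_{\alpha_i\alpha_i}=Q_{\beta_i\beta_i}$, $Q_{\alpha_i\beta_i}=-Q_{\beta_i\alpha_i}$, and $Q_{\gamma_i\gamma_i}^T Q_{\gamma_i\gamma_i}=Q_{\gamma_i\gamma_i}^T J_{2|\alpha_i|} Q_{\gamma_i\gamma_i} J_{2|\alpha_i|}^T=I_{2|\alpha_i|}$. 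The first set of identities identifies $Q$ as the symplectic direct sum $Q_{[1]}\oplus^{\operatorname{s}}\cdots\oplus^{\operatorname{s}} Q_{[r]}$ with $Q_{[i]}\coloneqq Q_{\gamma_i\gamma_i}$ having the form \eqref{eq:orthosymplectic_matrix_representation}, while the last two identities say precisely that each $Q_{[i]}$ is both orthogonal and symplectic. This is essentially the content of the remark following Proposition~\ref{res1}.

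\textbf{Sufficiency.} Conversely, given any $Q=Q_{[1]}\oplus^{\operatorname{s}}\cdots\oplus^{\operatorname{s}}Q_{[r]}$ with $Q_{[i]}\in \operatorname{OrSp}(2|\alpha_i|)$, I need to verify $\hat{S}\coloneqq SQ\in \operatorname{Sp}(2n;A)$. Symplecticity of $\hat{S}$ follows by iterating the symplectic-direct-sum-is-symplectic calculation carried out explicitly in the paper (just before Section~\ref{sym_concatenation}): an easy induction on $r$ shows $Q\in \operatorname{Sp}(2n)$, so $\hat{S}=SQ$ is symplectic as a product of symplectic matrices. For the Williamson diagonalization, I may assume without loss of generality that $A=D\oplus D$ and $S=I_{2n}$. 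Observe that in this basis $D\oplus D$ is itself the symplectic direct sum $\oplus^{\operatorname{s}}_{i=1}^{r}\mu_i I_{2|\alpha_i|}$. Writing out $Q^T(D\oplus D)Q$ blockwise in the partition induced by the $\gamma_i$ and using the fact that $Q$ is zero outside the diagonal symplectic blocks $\gamma_i\gamma_i$, the cross terms vanish and each diagonal block becomes $Q_{[i]}^T(\mu_i I_{2|\alpha_i|})Q_{[i]}=\mu_i\, Q_{[i]}^T Q_{[i]}=\mu_i I_{2|\alpha_i|}$, since $Q_{[i]}$ is orthogonal. Reassembling gives $\hat{S}^T A\hat{S}=D\oplus D$, as required.

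\textbf{Main obstacle.} The only step that requires thought is noticing that for $H=0$ Proposition~\ref{res1} is tight (no error terms survive), so the ``up to $\mathcal{O}(\|H\|)$'' structural statements become exact identities. Once that is observed, the necessity direction is immediate, and the sufficiency direction is a routine blockwise verification aided by the symplectic-direct-sum computation already carried out earlier in the paper.
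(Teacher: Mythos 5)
Your proof is correct and complete, but it takes a genuinely different route from the paper: the paper offers no argument of its own for Proposition~\ref{sp2na} and simply cites Corollary~5.3 of \cite{jm} and Theorem~3.5 of \cite{son2021symplectic}, whereas you give a self-contained two-direction proof. Your necessity direction is exactly the observation made in the remark immediately after Proposition~\ref{res1} (set $H=0_{2n}$ so that the paper's convention for the Big-O notation forces every error term to vanish, leaving the exact vanishing of off-diagonal symplectic blocks together with orthogonality and symplecticity of each diagonal symplectic block), so it is perfectly consistent with the machinery already established. Your sufficiency direction is new relative to what the paper writes down, and it is both needed (the proposition asserts a characterization, not just an implication) and correct: after the harmless reduction to $A=D\oplus D$, $S=I_{2n}$, the fact that the symplectic direct sum commutes with transposition and with matrix multiplication, together with $D\oplus D = \mu_1 I_{2|\alpha_1|}\oplus^{\operatorname{s}}\cdots\oplus^{\operatorname{s}}\mu_r I_{2|\alpha_r|}$ and $Q_{[i]}^TQ_{[i]}=I_{2|\alpha_i|}$, gives $Q^T(D\oplus D)Q=D\oplus D$, and symplecticity of $SQ$ is immediate from the direct-sum calculation carried out in Section~\ref{back}. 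The trade-off is that your route makes the paper logically self-contained for this proposition at the cost of a page or so of verification, while the citation is shorter but external; both are legitimate, and yours has the advantage that it does not import a result the reader must look up.
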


In \cite{idel}, it is shown that if $A$ has no repeated symplectic eigenvalues, then for any fixed $H \in \operatorname{Sm}(2n)$, one can choose $S \in \operatorname{Sp}(2n; A)$ and $S(\varepsilon) \in \operatorname{Sp}(2n; A+\varepsilon H)$ for small $\varepsilon>0$ such that $\|S(\varepsilon)-S\|=\mathcal{O}(\sqrt{\varepsilon})$. We generalize their result to the more general case of $A$ having repeated symplectic eigenvalues. Moreover, we consider the most general perturbation of $A$ and strengthen the aforementioned result. 
\begin{theorem}\label{res4}
    Let $A \in \operatorname{Pd}(2n)$ and $H\in \operatorname{Sm}(2n)$ such that $A + H \in \operatorname{Pd}(2n).$
    Given any $\tilde{S}\in \operatorname{Sp}(2n; A+H)$, there exists $S \in \operatorname{Sp}(2n; A)$ such that \begin{align}
        \|\tilde{S}-S\|= \mathcal{O}(\|H\|).
    \end{align}
\end{theorem}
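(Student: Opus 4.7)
The plan is to derive the statement as an immediate corollary of Theorem~\ref{res2} together with Proposition~\ref{sp2na}, with essentially no new calculation. First, I would fix the given $\tilde{S}\in\operatorname{Sp}(2n;A+H)$ and choose an arbitrary reference matrix $S_0\in\operatorname{Sp}(2n;A)$. Applying Theorem~\ref{res2} to this pair produces an orthosymplectic matrix $Q = Q_{[1]} \oplus^{\operatorname{s}} \cdots \oplus^{\operatorname{s}} Q_{[r]}$, with $Q_{[i]} \in \operatorname{OrSp}(2|\alpha_i|)$, such that
\begin{equation*}
\tilde{S} \;=\; S_0 Q + \mathcal{O}(\|H\|).
\end{equation*}

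The natural candidate is then $S := S_0 Q$. Because $Q$ has exactly the symplectic-block-diagonal, orthosymplectic shape that parametrizes $\operatorname{Sp}(2n;A)$ as a right orbit of $S_0$, Proposition~\ref{sp2na} guarantees $S \in \operatorname{Sp}(2n;A)$. Substituting this choice of $S$ back into the displayed equation and passing to norms yields
\begin{equation*}
\|\tilde{S} - S\| \;=\; \|\mathcal{O}(\|H\|)\| \;=\; \mathcal{O}(\|H\|),
\end{equation*}
which is the required bound.

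I expect no real obstacle here, since the hard work has already been done in Proposition~\ref{res1} (block-by-block control of $S_0^{-1}\tilde{S}$) and in Theorem~\ref{res2} (upgrading those block estimates via Gram--Schmidt into a genuine orthosymplectic correction). The only point worth noting is a compatibility check: the partition $\alpha_1,\ldots,\alpha_r$ of $\{1,\ldots,n\}$ into multiplicity classes of the symplectic eigenvalues of $A$ governs both the block shape of the $Q$ produced by Theorem~\ref{res2} and the block shape allowed by Proposition~\ref{sp2na}, so $S_0 Q$ is automatically in $\operatorname{Sp}(2n;A)$ and no further matching argument is needed.
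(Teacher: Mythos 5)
Your argument matches the paper's proof essentially line for line: pick an arbitrary reference matrix in $\operatorname{Sp}(2n;A)$, invoke Theorem~\ref{res2} to get the orthosymplectic block-diagonal $Q$, set $S$ to be the product, and use Proposition~\ref{sp2na} to confirm $S\in\operatorname{Sp}(2n;A)$. No gaps; this is exactly the intended corollary-style derivation.
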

\begin{proof}
    Let $M \in \operatorname{Sp}(2n; A)$. By Theorem~\ref{res2}, we have
    \begin{align*}
        \tilde{S} = MQ + \mathcal{O}(\|H\|),
    \end{align*}
    where $Q=Q_{[1]} \oplus^{\operatorname{s}} \cdots \oplus^{\operatorname{s}} Q_{[r]}$ such that $Q_{[i]}\in \operatorname{OrSp}(2|\alpha_i|)$ for all $i=1,\ldots, r$. Set $S \coloneqq MQ$ so that $\|\tilde{S}-S\|=\mathcal{O}(\|H\|)$. We also have $S \in \operatorname{Sp}(2n; A)$ which follows from Proposition~\ref{sp2na}.
\end{proof}

We know from Theorem~\ref{res2} that the distance of the symplectic block $\left(S^{-1}\tilde{S}\right)_{\gamma_i \gamma_i}$ from $\operatorname{OrSp}(2|\alpha_i|)$ is $\mathcal{O}(\|H\|)$ for all $i=1,\ldots, r$. Since $ \operatorname{Sp}(2|\alpha_i|) \supset \operatorname{OrSp}(2|\alpha_i|)$, the distance of $\left(S^{-1}\tilde{S}\right)_{\gamma_i \gamma_i}$ from $\operatorname{Sp}(2|\alpha_i|)$ is expected to be even smaller. The following result shows that this distance is $\mathcal{O}(\|H\|^2)$.

Let $W=[u,v]$ be a $2n \times 2$ matrix such that $\operatorname{Range} (W)$
is {\it non-isotropic}, i.e., $u^TJ_{2n}v \neq 0.$
Let $R=\begin{pmatrix}1 & 0 \\ 0 & u^TJ_{2n}v \end{pmatrix}$ and 
$S=WR^{-1}.$ We then have $S \in \operatorname{Sp}(2n, 2)$. The decomposition $W=SR$ is called the elementary
SR decomposition (ESR).
See \cite{salam2005theoretical} for various versions of ESR and their applications in symplectic analogs of the Gram-Schmidt method.

\begin{proposition}\label{res3}
Let $A \in \operatorname{Pd}(2n)$ and $H\in \operatorname{Sm}(2n)$ such that $A + H \in \operatorname{Pd}(2n).$ Let $S\in \operatorname{Sp}(2n; A)$ and $\tilde{S} \in \operatorname{Sp}(2n; A+H)$. For each $i=1,\ldots,r$,  there exists $N_{[i]} \in \operatorname{Sp}(2|\alpha_i|)$ such that 
\begin{align*}
    \left(S^{-1}\tilde{S}\right)_{\gamma_i \gamma_i} = N_{[i]}+\mathcal{O}(\|H\|^2).
\end{align*}
\end{proposition}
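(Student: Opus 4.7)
Write $M := (S^{-1}\tilde{S})_{\gamma_i\gamma_i}$ and $2k := 2|\alpha_i|$. The plan is to ``project'' the almost-symplectic matrix $M$ onto $\operatorname{Sp}(2k)$ by a Newton-type correction, and then iterate so that any residual defect is killed off quadratically, keeping the total displacement $\mathcal{O}(\|H\|^2)$. The quantitative input is provided by Proposition~\ref{res1}: setting
\begin{align*}
    E := M^T J_{2k} M - J_{2k},
\end{align*}
equation \eqref{nq8} gives $\|E\| = \mathcal{O}(\|H\|^2)$, and $J_{2k}^T = -J_{2k}$ forces $E^T = -E$. Moreover, \eqref{nq9} (together with Theorem~\ref{res2}) shows that $M$ is close to an orthosymplectic matrix, so $\|M\|$ is bounded and $M$ is invertible for all sufficiently small $\|H\|$.

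Next I would construct the correction $C := I_{2k} + \tfrac12 J_{2k} E$. A direct computation using $J_{2k}^2 = -I_{2k}$ and the antisymmetry of $E$ yields
\begin{align*}
    C^T J_{2k}\, C = J_{2k} - E - \tfrac14\, E J_{2k} E = J_{2k} - E + \mathcal{O}(\|E\|^2),
\end{align*}
so that $C^T (J_{2k} + E)\, C = J_{2k} + \mathcal{O}(\|H\|^4)$. Setting $M^{(1)} := MC$, the symplectic defect of $M^{(1)}$ has shrunk from $\mathcal{O}(\|H\|^2)$ to $\mathcal{O}(\|H\|^4)$, while the displacement is controlled by $\|M^{(1)} - M\| = \|M(C - I_{2k})\| \leq \tfrac12 \|M\|\, \|J_{2k} E\| = \mathcal{O}(\|H\|^2)$.

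Iterating the construction, one obtains a sequence $M = M^{(0)}, M^{(1)}, M^{(2)}, \ldots$ whose symplectic defects satisfy $\|(M^{(\ell)})^T J_{2k} M^{(\ell)} - J_{2k}\| = \mathcal{O}(\|H\|^{2^{\ell+1}})$ and whose incremental corrections $\|M^{(\ell+1)} - M^{(\ell)}\|$ have the same order. For small $\|H\|$ the sequence is Cauchy (the bounds form a super-geometric series dominated by the first term), converging to a matrix $N_{[i]}$ with $N_{[i]}^T J_{2k} N_{[i]} = J_{2k}$ exactly, i.e.\ $N_{[i]} \in \operatorname{Sp}(2|\alpha_i|)$, and $\|N_{[i]} - M\| = \mathcal{O}(\|H\|^2)$, as required. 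The main technical point is bookkeeping: one has to verify the quadratic cancellation in the correction step and to bound $\|M^{(\ell)}\|$ uniformly in $\ell$, both of which reduce to routine estimates since the Newton scheme is a contraction on a neighborhood of $\operatorname{Sp}(2k)$ inside the general linear group. An alternative that bypasses the iteration altogether would be to apply the symplectic Gram--Schmidt based on the ESR decomposition recalled before the proposition, column-pair by column-pair; the fact that each pair of $M$ is already symplectically paired up to $\mathcal{O}(\|H\|^2)$ forces every scaling and subtraction step to modify $M$ by at most $\mathcal{O}(\|H\|^2)$, yielding the same conclusion.
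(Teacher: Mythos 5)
Your proposal is correct, and your primary route (a Newton-type iterative projection onto $\operatorname{Sp}(2k)$) is genuinely different from the paper's. The paper runs a \emph{finite} symplectic Gram--Schmidt process based on the ESR decomposition it recalls just before the proposition: it splits the columns of $M := (S^{-1}\tilde S)_{\gamma_i\gamma_i}$ into symplectic pairs $M_{[j]} = [u_j, v_j]$, at each step subtracts the skew-orthogonal projection onto the span of the previously processed pairs, and then normalizes by the ESR factor $R_{[j]} = \operatorname{diag}(1, u^T J v)$; identity~\eqref{nq8} guarantees that every subtraction and every scale factor deviates from the identity by only $\mathcal{O}(\|H\|^2)$, so after $|\alpha_i|$ steps the symplectic matrix $N_{[i]} = S_{[1]} \diamond \cdots \diamond S_{[|\alpha_i|]}$ lies within $\mathcal{O}(\|H\|^2)$ of $M$. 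Your iteration instead applies a multiplicative correction $C = I + \tfrac12 J E$ to the defect $E = M^T J M - J$; the antisymmetry $E^T = -E$ makes the first-order terms cancel, and your computation $C^T J C = J - E - \tfrac14 EJE$ correctly shows that the defect drops quadratically, so the displacements $\|M^{(\ell+1)} - M^{(\ell)}\| = \mathcal{O}(\|E^{(\ell)}\|)$ form a super-geometric series dominated by the first term and the limit is exactly symplectic and $\mathcal{O}(\|H\|^2)$-close to $M$. The paper's ESR approach is finite and explicit, and naturally reuses the symplectic-concatenation machinery already set up in Section~\ref{sym_concatenation}; your Newton scheme is more coordinate-free (it would work for any matrix group cut out by a bilinear relation), but needs the standard Cauchy/convergence bookkeeping you flag. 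Your closing alternative sketch via column-pair-by-column-pair ESR normalization is in essence the paper's own proof.
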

\begin{proof}
Without loss of generality, we can assume that $A$ has the diagonal form $A=D \oplus D$ and $S=I_{2n}$.
Let $u_1,\ldots,u_{|\alpha_i|}, v_1, \ldots, v_{|\alpha_i|}$ be the columns of $\tilde{S}_{\gamma_i \gamma_i}$. Set $M_{[j]} \coloneqq [u_j,v_j]$ for $j=1,\ldots,|\alpha_i|$. 
We will apply mathematical induction on $j$ to construct $N_{[i]}$.
We note that  $\tilde{S}_{\gamma_i \gamma_i}$ can be expressed as
\begin{align*}
    \tilde{S}_{\gamma_i \gamma_i} = M_{[1]} \diamond \cdots \diamond M_{[|\alpha_i|]}.
\end{align*}

Choose $W_{[1]}=M_{[1]}.$ 
We know from \eqref{nq8} that $\operatorname{Range}(W_{[1]})$ is non-isotropic for small $\|H\|$.
Apply ESR to $W_{[1]}$ to get $W_{[1]} = S_{[1]}R_{[1]}$, where
\begin{align}\label{eq:R_[1]}
R_{[1]}&=\begin{pmatrix} 1 & 0 \\ 0 & u_1^TJ_{2|\alpha_i|}v_1\end{pmatrix},
\end{align}
and  $S_{[1]}=W_{[1]} R_{[1]}^{-1} \in \operatorname{Sp}(2|\alpha_i|,2)$.
By \eqref{nq8}, we have $u_1^TJ_{2|\alpha_i|}v_1=1+\mathcal{O}(\|H\|^2)$. 
Substituting this in \eqref{eq:R_[1]} gives
\begin{align}
R_{[1]}&=I_2 + \mathcal{O}(\|H\|^2) \label{addedeqn12}.
\end{align}
Substituting the value of $R_{[1]}$ from \eqref{addedeqn12} in $W_{[1]} = S_{[1]}R_{[1]}$ gives 
 \begin{align*}
     M_{[1]}=W_{[1]}=S_{[1]} + \mathcal{O}(\|H\|^2).
 \end{align*}
 Our induction hypothesis is that, for $1\leq j < |\alpha_i|$, there exist $2|\alpha_i|\times 2$ real matrices $S_{[1]},\ldots, S_{[j]}$  satisfying $S_{[1]} \diamond \cdots \diamond S_{[j]} \in \operatorname{Sp}(2|\alpha_i|, 2j)$
and 
\begin{align}\label{nq14}
M_{[1]} \diamond \cdots \diamond M_{[j]} &= S_{[1]} \diamond \cdots \diamond S_{[j]} + \mathcal{O}(\|H\|^2).
\end{align}
We choose
\begin{align}\label{addedeqn18}
W_{[j+1]}=M_{[j+1]}-\left(S_{[1]} \diamond \cdots \diamond S_{[j]} \right)J^T_{2j}
\left(S_{[1]} \diamond \cdots \diamond S_{[j]} \right)^TJ_{2|\alpha_i|}M_{[j+1]}.
\end{align}
By \eqref{nq8} and \eqref{nq14}  we have
\begin{equation}\label{nq15}
W_{[j+1]}=M_{[j+1]}+\mathcal{O}(\|H\|^2),
\end{equation}
which implies
$\operatorname{Range}(W_{[j+1]})$ is non-isotropic for small $\mathcal{O}(\|H\|)$. Apply ESR to $W_{[j+1]}=[w_{j+1}, z_{j+1}]$ to 
get $W_{[j+1]} = S_{[j+1]}R_{[j+1]}$. Here $S_{[j+1]} \in \operatorname{Sp}(2|\alpha_i|,2)$ and 
\begin{align}
    R_{[j+1]} &=\begin{pmatrix} 1 & 0 \\ 0 & w_{j+1}^TJ_{2|\alpha_i|}z_{j+1}\end{pmatrix}.\label{addedeqn17}
\end{align}
From \eqref{nq8} and \eqref{nq15}, we get $w_{j+1}^TJ_{2|\alpha_i|}z_{j+1}=1+ \mathcal{O}(\|H\|^2)$.
Using this relation in \eqref{addedeqn17}  implies $R_{[j+1]}=I_2 + \mathcal{O}(\|H\|^2)$. Substituting this in $W_{[j+1]} = S_{[j+1]}R_{[j+1]}$ gives 
\begin{align}\label{addedeqn21}
    W_{[j+1]} &= S_{[j+1]}+\mathcal{O}(\|H\|^2).
\end{align}
Combining \eqref{nq15} and \eqref{addedeqn21} then gives
\begin{align*}
M_{[j+1]}=S_{[j+1]}+ \mathcal{O}(\|H\|^2).
\end{align*}
We thus have
\begin{align*}
    M_{[1]} \diamond \cdots \diamond M_{[j+1]} &= S_{[1]} \diamond \cdots \diamond S_{[j+1]} + \mathcal{O}(\|H\|^2).
\end{align*}
To complete the induction, we just need to show that 
$S_{[1]} \diamond \cdots \diamond S_{[j+1]} \in \operatorname{Sp}(2|\alpha_i|, 2(j+1))$.
We have
\begin{align*}
    S_{[1]} \diamond \cdots \diamond S_{[j+1]} &= \left(S_{[1]} \diamond \cdots \diamond S_{[j]}\right) \diamond S_{[j+1]}.
\end{align*}
By the necessary and sufficient condition for $(S_{[1]} \diamond \cdots \diamond S_{[j]}) \diamond S_{[j+1]} \in \operatorname{Sp}(2|\alpha_i|, 2(j+1))$, as discussed in Section~\ref{sym_concatenation},
 it is equivalent to show that $(S_{[1]} \diamond \cdots \diamond S_{[j]} )^T J_{2|\alpha_i|}S_{[j+1]}$ is the zero matrix.
Now, using the relation $W_{[j+1]} = S_{[j+1]}R_{[j+1]}$ we get
\begin{align}\label{final_induction_1}
    \left(S_{[1]} \diamond \cdots \diamond S_{[j]} \right)^T J_{2|\alpha_i|}S_{[j+1]}  &= \left(S_{[1]} \diamond \cdots \diamond S_{[j]} \right)^T J_{2|\alpha_i|}W_{[j+1]}R_{[j+1]}^{-1}.
\end{align}
Substitute in \eqref{final_induction_1}  the value of $W_{[j+1]}$ from \eqref{addedeqn18} to get
\begin{multline*}
    \left(S_{[1]} \diamond \cdots \diamond S_{[j]} \right)^T J_{2|\alpha_i|}S_{[j+1]}
    =\left(S_{[1]} \diamond \cdots \diamond S_{[j]} \right)^T J_{2|\alpha_i|} \\ \left[M_{[j+1]}-\left(S_{[1]} \diamond \cdots \diamond S_{[j]} \right)J^T_{2j}
\left(S_{[1]} \diamond \cdots \diamond S_{[j]} \right)^TJ_{2|\alpha_i|}M_{[j+1]}\right]R_{[j+1]}^{-1}.
\end{multline*}
Apply the induction hypothesis $S_{[1]} \diamond \cdots \diamond S_{[j]} \in \operatorname{Sp}(2|\alpha_i|, 2j)$ and simplify as follows:
\begin{align}
    &\left(S_{[1]} \diamond \cdots \diamond S_{[j]} \right)^T J_{2|\alpha_i|}S_{[j+1]} \nonumber \\    
    &=\left[\left(S_{[1]} \diamond \cdots \diamond S_{[j]} \right)^T J_{2|\alpha_i|} M_{[j+1]}-J_{2j} J^T_{2j}
\left(S_{[1]} \diamond \cdots \diamond S_{[j]} \right)^TJ_{2|\alpha_i|}M_{[j+1]} \right]R_{[j+1]}^{-1} \nonumber  \\
    &=\left[\left(S_{[1]} \diamond \cdots \diamond S_{[j]} \right)^T J_{2|\alpha_i|} M_{[j+1]}-
\left(S_{[1]} \diamond \cdots \diamond S_{[j]} \right)^TJ_{2|\alpha_i|}M_{[j+1]} \right]R_{[j+1]}^{-1} \nonumber \\
&= 0_{2j, 2} \label{addedeqn20}.
\end{align}
We have thus shown that $S_{[1]} \diamond \cdots \diamond S_{[j+1]} \in \operatorname{Sp}(2|\alpha_i|, 2(j+1))$.
By induction, we then get the desired matrix $N_{[i]}=S_{[1]} \diamond \cdots \diamond S_{[|\alpha_i|]}  \in \operatorname{Sp}(2|\alpha_i|)$, which satisfies
\begin{align*}
    \tilde{S}_{\gamma_i \gamma_i} = M_{[1]} \diamond \cdots \diamond M_{[|\alpha_i|]} = N_{[i]}+\mathcal{O}(\|H\|^2).
\end{align*}
\end{proof}


\section{Conclusion}
 One of the main findings of our work is that, given any $S\in \operatorname{Sp}(2n; A)$ and $\tilde{S} \in \operatorname{Sp}(2n; A+H)$, there exists an orthosymplectic matrix $Q$ such that $\tilde{S}=SQ+\mathcal{O}(\|H\|)$. Moreover, the orthosymplectic matrix $Q$ has structure $Q=Q_{[1]} \oplus^s \cdots \oplus^s Q_{[r]}$, where $Q_{[j]}$ is a $2|\alpha_j| \times 2 |\alpha_j|$ orthosymplectic matrix. Here $r$ is the number of distinct symplectic eigenvalues $\mu_1,\ldots, \mu_r$ of $A$ and $\alpha_j$ is the set of indices of the symplectic eigenvalues of $A$ equal to $\mu_j$.  We also proved that $S\in \operatorname{Sp}(2n; A)$ and $\tilde{S} \in \operatorname{Sp}(2n; A+H)$  can be chosen so that $\|\tilde{S}-S\|=\mathcal{O}(\|H\|)$.


\section*{Acknowledgments}
The authors are grateful to Prof.~Tanvi Jain for the insightful discussions that took place in the initial stage of the work. HKM acknowledges the National Science Foundation under Grant No.~2304816 for financial support. The authors thank Prof.~Mark M.~Wilde for pointing out some mistakes during the preparation of the manuscript and Dr.~Tiju Cherian John for some critical comments. The authors are thankful to the anonymous referee for their thoughtful comments and suggestions that improved the readability of the paper.


 \bibliographystyle{unsrt}
\bibliography{block_symplectic_perturbation}



\end{document}